\newcommand{\commentout}[1]{}
\newcommand{\R}{\mathbb{R}}
\newcommand {\Chi} {{\bf \raise 2pt \hbox{$\chi$}} }
\newcommand {\f}   {\frac}
\newcommand {\p}   {\partial}
\newcommand{\dis}{\displaystyle}
\newcommand{\beq}{\begin{equation}}
\newcommand{\beqa} {\begin{array}{rl}}
\newcommand{\eeq}{\end{equation}}
\newcommand{\eeqa}{\end{array}}
\newtheorem{theorem}{Theorem}%[section]
\newtheorem{remark}[theorem]{Remark}
\newtheorem{claim}[theorem]{Claim}
\title{\LARGE The contribution of age structure to cell population responses to targeted therapeutics}
\author{Pierre Gabriel\thanks{Universit\'e Pierre et Marie Curie-Paris 6, UMR 7598 LJLL, BC187, 4, place de Jussieu, F-75252 Paris cedex 5, France. Email: gabriel@ann.jussieu.fr}
\thanks{Present address: INRIA Rh\^ones-Alpes, projet BEAGLE, B\^atiment CEI-1, BP 52132, 66 Boulevard Niels Bohr, F-69603 Villeurbanne cedex, France.}\and
Shawn P. Garbett\thanks{Department of Cancer Biology, Vanderbilt University, 2220 Pierce Ave., Nashville, TN 37232}\and
Darren R. Tyson\footnotemark[2]\and
Glenn F. Webb\thanks{Department of Mathematics, Vanderbilt University, 1326 Stevenson Center, Nashville, TN 37240. Email: glenn.f.webb@vanderbilt.edu}}
\date{\today}
\begin{document}
\maketitle
\pagestyle{plain}
\pagenumbering{arabic}

\begin{abstract}
Cells grown in culture act as a model system for analyzing the effects of anticancer compounds, which may affect cell behavior in a cell cycle position-dependent manner.
Cell synchronization techniques have been generally employed to minimize the variation in cell cycle position.
However, synchronization techniques are cumbersome and imprecise and the agents used to synchronize the cells potentially have other unknown effects on the cells.
An alternative approach is to determine the age structure in the population and account for the cell cycle positional effects post hoc.
Here we provide a formalism to use quantifiable lifespans from live cell microscopy experiments to parameterize an age-structured model of cell population response.
\end{abstract}

%%%%%%%%%%%%%%%%%%%%%%%%%%%%%%%%%%%%%%%%%%%%
\noindent{\bf Keywords:} Cell cycle, intermitotic time, renewal equation, exponentially modified gaussian

\

%\noindent{\bf AMS Class. No.} 

\

\section{Introduction}

When examined individually in time lapse microscopy experiments, cells grown in culture display variability in the length of their cell cycles (between mitotic events),
and this variability is represented by intermitotic time (IMT) distributions \cite{Dowling,Lee,Liu}.
These distributions are usually obtained from asynchronously dividing populations of cells, which achieve a steady-state age structure when the population is growing exponentially.
In experimental studies that examine the effects of perturbations on cellular proliferation,
it is often desirable to determine whether the perturbation is affecting cells in particular stages within their cell cycle,
i.e. whether the perturbation has cell cycle-specific effects.
Since we do not know where a cell is in the cycle,
an alternative is to define the age of a cell as the time elapsed from its last division and deal with this measurable quantity instead of the cell cycle phase.
In this paper we adopt this definition and provide a formalism to convert data obtained as IMT distributions to parameterize an age-structured population model
and, thus, identifying the contribution of age structure to the response of the cell population to perturbation.

Models of age-structured populations using partial differential equations, such as those originally discribed by A. Lotka, A.G. McKendrick, W.O. Kermack, and F. von Foerster,
are well adapted to model the dynamical features of experimental cultures of cells transiting the cell cycle with variable IMT. 
These models have been widely studied from a mathematical perspective \cite{Ainseba,Ainseba2,Calsina,Clement,DHT,EngelNagel,MetzDiekmann,BP,Picart2,Webb},
but the application of the model to experimental data has been hampered by an inability to determine the age-dependent model parameters.
The usual approach for parameter estimation is to solve numerically an inverse problem
(see~\cite{Ackleh,Banks,Banks2,DPZ,DMZ,Engl,Gyllenberg,Picart,PilantRundell,PilantRundell2,Rundell,Rundell2,PZ} on this question for structured population models),
but this requires extensive input data and is specific to a given situation.
A much more convenient approach is to assume that the distributed parameters lie in a class of functions with only a few constants (power laws or other forms~\cite{PG})
and obtain the parameters from the fit of the functions to experimental data.

One function that often provides a better fit to IMT data than others, such as log-normal, inverse normal or gamma functions, is an exponentially modified Gaussian (EMG)~\cite{Golubev,TysonGarbett}.
Under conditions in which the IMT distribution can be explained by an EMG model, we submit that the age-dependent division rate can be identified as an error function.
Starting from this observation, we present a simple method to recover the parameters of this error function from parameters fitted to the experimental IMT data.
Once reliably parameterized, the age-structured model can be used to make predictions about cell age-dependent effects of perturbations, for example,
whether cells arrest during their cell cycle in response to treatment with antiproliferative compounds.

Individual based models (IBM) have also been used to track individual cell behavior in proliferating cell populations
\cite{Alarcon,Mallet,Gerlee,Patel,Powathil,Quaranta08,Quaranta05,RamisConde,Ribba}.
IBM have advantages in their direct connection to discrete events and to cell-cell interactions.
IBM are usually readily implementable to computer simulations, but may require many repeated runs to access an average outcome.
IBM are usually difficult to analyze theoretically with respect to parametric input, particularly with highly sensitive parameters.
The advantage of continuum differential equations models, such as the cell age structured models we develop here, are their tractability for theoretical analysis
and their reproducibility for specific parametric input. The practical difficulties of determining this parametric input is the problem we focus upon here.
IBM and differential equations models should be viewed as companion approaches,
which complement and compare their insights and predictions for proliferating cell population behavior, when individual cell variation is a primary consideration.

\

\section{The age-structured model}

As with human populations, we can associate an age to each individual cell in a cell population.
We define the age of a cell as the time elapsed from its last mitosis.
Starting from this definition, we derive an age-structured model giving the dynamics of populations of proliferating and quiescent cells.
This model is adapted to investigate the effect of treatment by the drug erlotinib on \textit{in vitro} PC-9 cancer cell lines.
It has been recently shown in~\cite{TysonGarbett} that the main effect of erlotinib on cancer cells is to induce entry into quiescence.
In~\cite{TysonGarbett} a system of ordinary differential equations  (without age structure) is used to model these experiments.
We hypothesize here that erlotinib induced quiescence is linked to the age of the cells involved.
Many age-structured models with quiescence can be found in the literature (see~\cite{Arino,Bekkal1,Bekkal2,GyllWebb1,GyllWebb2,GyllWebb3} for examples).
Here we present a simple age-structured model with quiescence which allows us to explain observed delays in response to erlotinib.
We start from the observation that there is no effect of the treatment on total population growth during the first twenty hours (see~\cite{TysonGarbett} and Figure~\ref{fig:compare}).
This time corresponds almost exactly to the minimal age of division observed for PC-9 cells.
It suggests that erlotinib acts only during a specific phase of the cell cycle, which based on its biological activity would be expected to be in G1.
The model we present is based on this idea and considers a fractional rate $f$ of cells that become quiescent in an age-dependent manner,
where the fraction is assumed to reflect the dose of erlotinib used for the treatment.

\

We start from the the McKendrick--Von Foerster's model which is widely used to model the cell cycle (see~\cite{MetzDiekmann,BP,Webb} and references).
The partial differential equation in this model provides the evolution of the density $p(t,a)$ of cells with age, or ``cell cycle phase'', $a$ at time~$t.$
To take into account the quiescent cells, we introduce the quantity $Q(t)$ which representsthe density of quiescent cells at time $t.$
It evolves according to an ordinary differential equation coupled to the equation on $p(t,a).$
We obtain the system
\beq\label{eq:quiescence}
\left\{\begin{array}{l}
\dis\f{\p}{\p t}p(t,a)+\f{\p}{\p a}p(t,a)+\beta(a)p(t,a)+\mu\, p(t,a)=0,\qquad t\geq0,\quad a>0,
\vspace{.3cm}\\
p(t,0)=\dis2(1-f)\int_0^\infty \beta(a)p(t,a)\,da,
\vspace{.3cm}\\
p(0,a)=p_0(a),
\vspace{.3cm}\\
\dis\f{d}{dt}Q(t)=2f\int_0^\infty \beta(a)p(t,a)\,da-\mu\, Q(t),
\vspace{.3cm}\\
Q(0)=Q_0.
\end{array}\right.
\eeq
In this model, the proliferaing cells age one-to-one with time at speed $\f{da}{dt}=1$, and divide with rate $\beta(a)\geq0.$
When a cell divides at mitosis, it produces two daughters which are either proliferating with age $a=0$, or quiescent.
This is taken into account by the boundary condition at $a=0$ and the first term in the equation for $Q(t),$
where the parameter $f\in[0,1]$ represents the fraction of proliferating cells which become quiescent.
The coefficient $\mu$ is a death rates.
The number of proliferating cells at time $t$ with age between $a_1$ and $a_2$ is 
$\int_{a_1}^{a_2}p(t,a) \, da$, and the total number of cells at time $t$ is $N(t)=\int_{0}^{\infty}p(t,a) \, da+Q(t).$

The division rate $\beta$ has a probabilistic interpretation: the probability that a cell did not divide by age $a$ is given by
$${\mathbb P}(a)=e^{-\int_0^a\beta(a')\,da'}.$$
Since all the proliferating cells must divide at some time by definition, the division rate has to satisfy
\beq\label{as:beta}\lim_{a\to+\infty}\int_0^a\beta(a')\,da'=+\infty.\eeq

The model is completed with intial data $p_0(a)$ and $Q_0.$
We choose the time $t=0$ to be the beginning of the erlotinib treatment, so at this time there are no quiescent cells $(Q_0=0)$.
The age distribution of the proliferating cells is assumed to be at equilibrium ({\it i.e.} $p_0(a)=const\,\hat p(a),$ see Section~\ref{sec:imt} for the mathematical definition of $\hat p$).
The experimental values of the total population $N(t)$ along time are ploted after normalization by the initial value $N(0)$ on a log-scale (see Figure~\ref{fig:compare}).
Because of this normalization, we consider an initial distribution such that $\int_0^\infty p_0(a)\,da=1$ which leads to $p_0(a)=\hat p(a)$ because of the definition of $\hat p.$

\

We want to compare the solutions of model~\eqref{eq:quiescence} to the experimental observations presented in~\cite{TysonGarbett}.
The first step is to estimate the different parameters of the model.
To estimate the value of the fraction $f$ for different doses of treatment, we use data available in~\cite{TysonGarbett}.
The fraction $F$ of quiescent cells is estimated by examination of whether cells treated with erlotinib divide or not before the end of the experiment.
We want to use this experimental fraction $F$ to estimate the coefficient $f$ of the model. 
For the sake of simplicity, assume that the death rate $\mu$ is $0$ in~\eqref{eq:quiescence}.
In this case, at the end of the labeling period $t_0$, the quantity of labeled quiescent cells corresponds to $Q(t_0)$ and the quantity of proliferating cells corresponds to $\int_0^{t_0}p(t,0)\,dt.$
Thus, the fraction $F$ of quiescent cells at $t_0$ is
\beq\label{eq:frac}F=\f{Q(t_0)}{Q(t_0)+\int_0^{t_0}p(t,0)\,dt}.\eeq
Now we compute this quantity from model~\eqref{eq:quiescence}, keeping in mind that we have assumed no mortality.
We have
\begin{align*}
Q(t_0)&=\int_0^{t_0}\f{dQ}{dt}(t)\,dt\\
&=2f\int_0^{t_0}\int_0^\infty \beta(a)p(t,a)\,da\,dt
\end{align*}
and
$$\int_0^{t_0} p(t,0)\,dt=2(1-f)\int_0^{t_0}\int_0^\infty \beta(a)p(t,a)\,da\,dt.$$
Finally we obtain
\beq\label{eq:frac2}\f{Q(t_0)}{Q(t_0)+\int_0^{t_0}p(t,0)\,dt}=\f{2f}{2f+2(1-f)}=f.\eeq
So when there is no death rate, the experimental fraction $F$ corresponds exactly to the coefficient $f.$
In our simulations we consider positive a death rate $\mu$ as suggested by the results in Section~\ref{ssec:imt_beta_mu}.
But because the numerical value we recover for $\mu$ is very small $(\mu\ll1$ in Figure~\ref{fig:imtfitmu}),
we can consider that $f$ is still well-approximated by $F$ and we use the fractions available in~\cite{TysonGarbett} to parameterize $f.$

The identification of the coefficients $\beta$ and $\mu$ is much more delicate because $\beta(a)$ is a distributed function.
The two following sections are devoted to presenting a method to recover these coefficients from IMT distributions.
Then these coefficients are used to compare the solutions of Equation~\eqref{eq:quiescence} to experimental data in Section~\ref{sec:numres}.

\

\section{Modelling the intermitotic time}\label{sec:imt}

Since the age of a cell is defined as the time elapsed from its last mitosis, the IMT of a cell is its age at division.
This definition allows us to interpret the IMT distributions in terms of a dynamic age-structured population model.

\

Experimental IMT distributions can be seen as histograms which represent, for a given population,
the density of cells with a certain age of division (see Figure~\ref{fig:imt} for an example taken from~\cite{TysonGarbett}).
The age of division is distributed into $N_a$ bins of width $\Delta a.$
For all $i$ between $1$ and $N_a,$ the height $H_i$ of the $i^{th}$ bar represents the density of cells with an age of division in the window $[i\Delta a,(i+1)\Delta a].$
The histogram $(H_i)_{1\leq i\leq N_a}$ is normalized to represent a density
\beq\label{eq:norm_hist}\Delta a\sum_{i=1}^{N_a} H_i=1.\eeq

\begin{figure}[h!]
\begin{minipage}{\linewidth}
\begin{center}
\includegraphics[width=.6\textwidth]{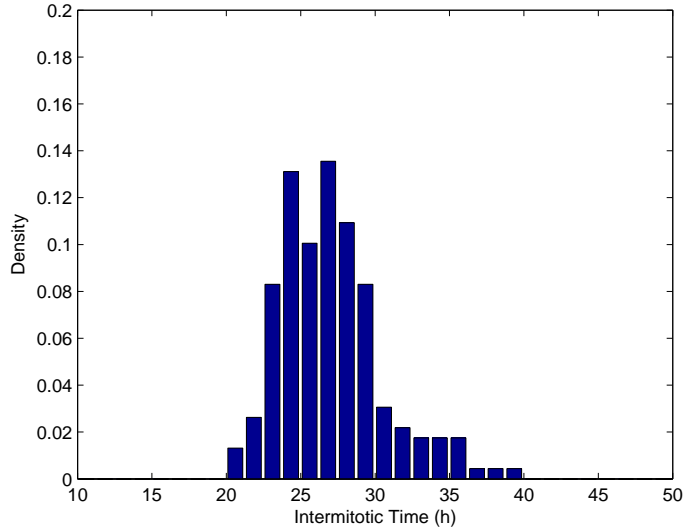}
\end{center}
\end{minipage}
\caption{Plot of an histogram $(H_i)_{1\leq i\leq N_a}$ representing the IMT distribution of a population of PC-9 lung cancer cells.
The data is from~\cite{TysonGarbett} and the bin width is $\Delta a=\f{10}{8}.$}\label{fig:imt}
\end{figure}

We briefly explain here the method used in~\cite{TysonGarbett} to build IMT histograms (we refer to this paper for more details).
The data are obtained using extended temporally resolved automated microscopy (ETRAM) in which cell nuclei are fluorescently labeled,
imaged by automated time lapse fluorescence microscopy, and tracked as individual cells from the resultant image stacks.
Cells are subjected to various microenvironmental conditions (such as the addition of a drug) at a specific time during image acquisition (set to time $t_0)$
and the effect of the perturbation from that point in time is followed across the entire population and individual cells within it.
The duration of observation $(T)$ is chosen large enough to observe that almost all cycling cells divide before this final time.
The data are organized into the bins to obtain a histogram, which is then normalized to represent a density of cells.

\

The next step is to describe the IMT distributions in terms of an age-structured model and use this interpretation to parameterize Equation~\eqref{eq:quiescence}.
We want to estimate the coefficients $\beta$ and $\mu,$ and for this we fix $f=0$ in~\eqref{eq:quiescence}.
This corresponds to the situation when there is no erlotinib treatment and thus all the cells are proliferating.
We recover in this case the original McKendrick--Von Foerster's model
\beq\label{eq:renewal}
\left\{\begin{array}{l}
\dis\f{\p}{\p t}p(t,a)+\f{\p}{\p a}p(t,a)+\beta(a)p(t,a)+\mu\, p(t,a)=0,\qquad t\geq0,\quad a>0,
\vspace{.3cm}\\
p(t,0)=\dis2\int_0^\infty \beta(a)p(t,a)\,da,
\vspace{.3cm}\\
p(0,a)=p_0(a).
\end{array}\right.
\eeq
It is a transport equation which satisfies a maximum principle, namely if the initial distribution $p_0(a)$ is nonnegative (positive)
then the distribution $p(t,a)$ remains nonnegative (positive) for all time $t>0.$
The solutions of~\eqref{eq:renewal} have a remarkable behavior as time evolves, in that the age structure equilibrates no matter what the initial age structure of the population may be.
This effect is known as \textit{asynchronous exponential growth}, and its interpretation is that the population disperses over time to a limiting asymptotic equilibrium age structure
where the fraction of the population in any age range $[a_1,a_2]$ satisfies  
$$\lim_{t \rightarrow \infty} \frac{\int_{a_1}^{a_2}p(t,a) \, da}{\int_{0}^{\infty}p(t,a) \, da}$$ = 
a constant independent of the initial age structure \cite{Webb1987}.
Moreover, the solutions to this equation, as $t \rightarrow \infty$, are known to behave like a separated variables solution, that is,
$p(t,a) =$ a function of time only $\times$ a function of age only.
More precisely, consider the eigenvalue problem
\beq\label{eq:eigen}\left\{\begin{array}{l}
\lambda \hat p(a) + \partial_a\hat p(a) + \beta(a)\hat p(a) + \mu\,\hat p(a)= 0,
\vspace{2mm}\\
\hat p(0) = 2\int_0^\infty \beta(a)\hat p(a)\,da,
\vspace{2mm}\\
\hat p(\cdot)>0,\quad \int \hat p(a)\,da=1,
\end{array}\right.
\eeq
This problem has a unique solution given by
$$\hat p(a)=\hat p(0)e^{-\int_0^a (\beta(a')+\mu+\lambda)\,da'}$$
where $\lambda>0$ is the unique value such that
\beq\label{eq:lambda}1=2\int_0^\infty\beta(a)e^{-\int_0^a(\beta(a')+\mu +\lambda)\,da'}da\eeq
and
$$\hat p(0)=\left(\int_0^\infty e^{-\int_0^a (\beta(a')+\mu+\lambda)\,da'}\,da\right)^{-1}.$$
Then we can prove that, for large times,
$$p(t,a)\sim const\,\hat p(a)e^{\lambda t}$$
(see Appendix~\ref{ap:equilibrium} for more details and references).
If the population of cells proliferates over a sufficiently long time, we can assume that this asymptotic behavior is reached and use it to investigate the IMT distributions.
An experimental  observation that the total population (independent of age structure) is growing exponentially is an indicator that the population has effectively reached the equilibrium age distribution,
which can be checked using ETRAM for other time series data collection (see Figure~\ref{fig:lambda}).

\begin{figure}[h]
\psfrag{Ln(N(t))}[l]{$\ln\bigl(\f{N(t)}{N(0)}\bigr)$}
\psfrag{t}[c]{$t\quad(in\ hour)$}
\begin{center}
\includegraphics[width=.6\textwidth]{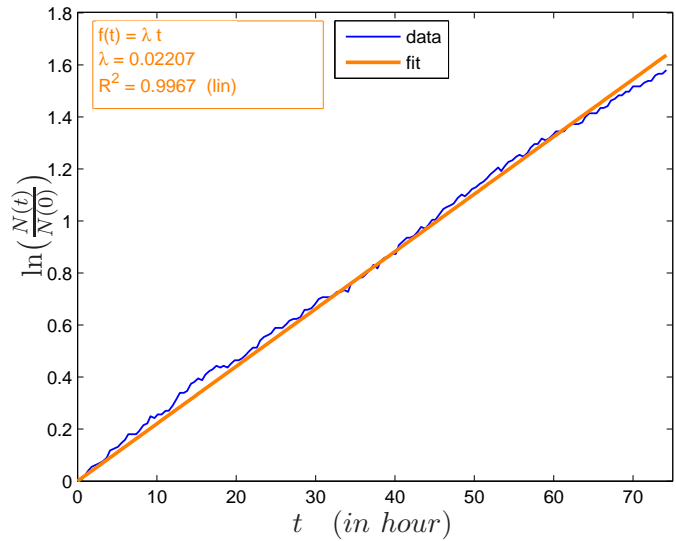}
\end{center}
\caption{The evolution of the total population $N(t)$ is plotted on $\log$-scale (blue) (the data is from~\cite{TysonGarbett}).
The data are well-fitted with a line of slope $\lambda = 0.022$ (correlation coefficient: $R^2=0.9967)$.
This is evidence that the total population is growing exponentially fast with exponential constant $\lambda$.
Thus, $N(t)=\exp^{\lambda t} N(0)$, and if $t^{\star}$ is the population doubling time, then $N(t^{\star}) = 2 N(0)$, and $t^{\star} = \ln 2 / \lambda$.}\label{fig:lambda}
\end{figure}

\

Now we give a continuous expression of the IMT distribution in terms of the age-structured model.
The age distribution of the cells relative to time $t_0$ (time of perturbation) is given by a truncation of the equilibrium age distribution $\hat p$
$$\bar p_0(a)=\left\{\begin{array}{ll}
               \rho\,e^{-\int_0^a(\beta(a')+\mu+\lambda)\,da'}&\text{if}\ 0\leq a\leq t_0,\\
               0&\text{if}\ a>t_0.
              \end{array}\right.$$
where $\rho$ is a scaling constant.
We then follow this age distribution along time and obtain for $t>0$
$$\bar p(t,a)=\left\{\begin{array}{ll}
               \rho\,e^{-\int_0^a(\beta(a')+\mu+\lambda)\,da'}e^{\lambda t}&\text{if}\ t\leq a\leq t+t_0,\\
               0&\text{otherwise.}
              \end{array}\right.$$
According to the age-structured model, and because of the normalization~\eqref{eq:norm_hist}, the IMT distribution satisfies
\beq\label{eq:IT}I_T(a):=C_T^{-1}\int_0^T \beta(a)\bar p(t,a)\,dt,\qquad C_T:=\int_0^\infty\int_0^T \beta(a)\bar p(t,a)\,dt\,da,\eeq
by definition of the division rate $\beta.$
The fact that no cell can divide in a time less than $t_0$ means
\beq\label{as:betat0}\forall\,a\leq t_0,\qquad\beta(a)=0.\eeq
Under this condition, for $T$ large, the function $I_T$ is close to the function
\beq\label{eq:Iinfty}I_\infty(a):= C_\infty^{-1}\beta(a)e^{-\int_0^a(\beta(a')+\mu)\,da'}\eeq
where
\beq\label{eq:Cinfty}C_\infty:=\int_0^\infty\beta(a)e^{-\int_0^a(\beta(a')+\mu)\,da'}\,da.\eeq
This convergence is made mathematically precise by the following claim
(the proof is given in Appendix~\ref{ap:claim} for a more general case in which it is not assumed that $p(t,a)$ is close to the equilibrium age distribution):

\begin{claim}\label{claim:conv}
Under Assumptions~\eqref{as:beta} and \eqref{as:betat0}, we have the convergence
$$\int_0^\infty|I_T(a)-I_\infty(a)|\,da\xrightarrow[T\to\infty]{}0.$$
\end{claim}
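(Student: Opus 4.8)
The plan is to evaluate the inner integral in \eqref{eq:IT} explicitly, read off $\lim_{T\to\infty}C_T$, and then deduce $L^1$-convergence from the fact that $I_T$ and $I_\infty$ are probability densities that converge pointwise (Scheff\'e's lemma); a hands-on estimate splitting the domain works just as well.

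First I would fix $a>0$ and determine the support of $t\mapsto\bar p(t,a)$. By definition $\bar p(t,a)\ne0$ exactly when $t\le a\le t+t_0$ and $0\le t\le T$, i.e. for $t\in[\max(0,a-t_0),\min(a,T)]$, where $\bar p(t,a)=\rho\,e^{-\int_0^a(\beta(a')+\mu+\lambda)\,da'}e^{\lambda t}$. Using $e^{-\int_0^a(\beta(a')+\mu+\lambda)\,da'}=e^{-\lambda a}\,\Pi(a)$ with $\Pi(a):=e^{-\int_0^a(\beta(a')+\mu)\,da'}$, and carrying out the elementary integral $\int e^{\lambda t}\,dt$, I get that $\int_0^T\beta(a)\bar p(t,a)\,dt$ equals $0$ for $a\le t_0$ (here $\beta(a)=0$ by \eqref{as:betat0}), equals $\frac{\rho}{\lambda}(1-e^{-\lambda t_0})\,\beta(a)\Pi(a)$ for $t_0<a\le T$, equals $\frac{\rho}{\lambda}\bigl(e^{\lambda T}-e^{\lambda(a-t_0)}\bigr)e^{-\lambda a}\beta(a)\Pi(a)$ for $T<a\le T+t_0$, and $0$ for $a>T+t_0$. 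The point is that on the bulk interval $(t_0,T]$ the time-integral produces exactly a fixed constant multiple of $\beta(a)\Pi(a)$, the shape appearing in $I_\infty$ from \eqref{eq:Iinfty}, while on the short boundary interval $(T,T+t_0]$ the expression is nonnegative and bounded by $\frac{\rho}{\lambda}\beta(a)\Pi(a)$, since $0\le(e^{\lambda T}-e^{\lambda(a-t_0)})e^{-\lambda a}\le1$ there.

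Next I would integrate in $a$. Writing $K:=\frac{\rho}{\lambda}(1-e^{-\lambda t_0})$, this gives $C_T=K\int_{t_0}^T\beta(a)\Pi(a)\,da+R_T$ with $0\le R_T\le\frac{\rho}{\lambda}\int_T^\infty\beta(a)\Pi(a)\,da$. Since $\mu\ge0$ and $\int_0^\infty\beta(a)e^{-\int_0^a\beta(a')\,da'}\,da=1$ by \eqref{as:beta}, the constant $C_\infty$ in \eqref{eq:Cinfty} is finite (indeed $\le1$) and positive (as $\beta\not\equiv0$); letting $T\to\infty$, the first term increases to $KC_\infty$ and $R_T\to0$ as the tail of a convergent integral, so $C_T\to KC_\infty>0$. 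Hence $I_T$ is well defined for large $T$ and $\int_0^\infty I_T=\int_0^\infty I_\infty=1$. For $a\le t_0$ one has $I_T(a)=I_\infty(a)=0$, and for fixed $a>t_0$ and $T>a$ the bulk formula gives $I_T(a)=\frac{KC_\infty}{C_T}\,I_\infty(a)\to I_\infty(a)$; thus $I_T\to I_\infty$ pointwise on $(0,\infty)$, with all densities nonnegative and of total mass $1$, and Scheff\'e's lemma yields $\int_0^\infty|I_T-I_\infty|\,da\to0$. Alternatively, one splits $\int_0^\infty|I_T-I_\infty|$ over $(t_0,T]$, where it is $|\tfrac{KC_\infty}{C_T}-1|\int_{t_0}^TI_\infty\le|\tfrac{KC_\infty}{C_T}-1|\to0$; over $(T,T+t_0]$, where it is $\le\int_T^{T+t_0}(I_T+I_\infty)$ and both pieces vanish because $\int_T^{T+t_0}I_\infty\to0$ and $\int_T^{T+t_0}I_T\le C_T^{-1}\tfrac{\rho}{\lambda}\int_T^\infty\beta\Pi\to0$; and over $(T+t_0,\infty)$, where it is $\int_{T+t_0}^\infty I_\infty\to0$.

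The only genuinely delicate point is the boundary layer $a\in(T,T+t_0]$: there the truncated profile has not yet ``aged past'' the observation window, so the clean proportionality with $I_\infty$ fails and one must instead dominate the contribution by the $L^1$-tail $\frac{\rho}{\lambda}\int_T^\infty\beta(a)\Pi(a)\,da$, which vanishes thanks to the integrability of $\beta\Pi$ guaranteed by \eqref{as:beta}; everything else is a direct computation. The more general statement alluded to and proved in Appendix~\ref{ap:claim}, where $p(t,a)$ is not assumed to be at equilibrium, should require replacing this exact formula by an estimate on the genuine solution of the renewal equation \eqref{eq:renewal}, the real work being to control $\int_0^T\beta(a)p(t,a)\,dt$ through the asynchronous exponential growth relaxation rather than in closed form.
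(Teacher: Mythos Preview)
Your argument is correct. The explicit computation of $\int_0^T\beta(a)\bar p(t,a)\,dt$ for the truncated equilibrium profile, the identification of the bulk constant $K=\tfrac{\rho}{\lambda}(1-e^{-\lambda t_0})$, the tail bound on $(T,T+t_0]$, and the conclusion $C_T\to KC_\infty$ are all sound; either Scheff\'e's lemma or your three-piece splitting then closes the $L^1$ convergence.

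The paper's own proof follows the same architecture (compute the time integral, isolate the bulk, bound the boundary layer, show $C_T$ converges, split $\int|I_T-I_\infty|$), but it is written for an \emph{arbitrary} initial profile $\bar p_0$ supported in $[0,t_0]$, not only the equilibrium one. There the method of characteristics gives $\bar p(t,a)=\bar p_0(a-t)e^{-\int_{a-t}^a\beta}$, and the role of your constant $K$ is played by $P(t_0)=\int_0^\infty\bar p_0$; the crucial simplification $\bar p_0(u)e^{\int_0^u\beta}=\bar p_0(u)$ comes from \eqref{as:betat0}, exactly as in your case. Your direct handling of $\mu\ge0$ is slightly cleaner than the paper's, which writes out only $\mu=0$ and asserts the extension. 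The Scheff\'e shortcut does not appear in the paper and is a tidy alternative to the hands-on estimate.

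One correction to your closing remark: the general case in the appendix does \emph{not} require any asynchronous-exponential-growth relaxation for the renewal equation. Because labeled cells are tracked without re-injection of daughters, $\bar p$ solves the pure transport equation $\partial_t\bar p+\partial_a\bar p+(\beta+\mu)\bar p=0$ with no boundary condition, so the characteristics formula is still exact and explicit for any $\bar p_0$; the only change from your computation is that $\int_{a-T}^a\bar p_0(u)\,du$ replaces your constant $\tfrac{1}{\lambda}(1-e^{-\lambda t_0})$ times $\rho$.
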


\begin{remark}
We can easily prove that, under the additional assumption
\beq\label{as:beta1}\lim_{a\to+\infty}\beta(a)e^{-\int_0^a\beta(a')\,da'}=0,\eeq
we also have
$$\sup_{a\geq0}|I_T(a)-I_\infty(a)|\xrightarrow[T\to+\infty]{}0.$$
Condition~\eqref{as:beta1} is satisfied from Assumption~\eqref{as:beta}, if for example, $\beta$ bounded or monotonic.
\end{remark}

Because in the experimental protocol $T$ is chosen large enough to observe no dividing cells at the end,
we can approximate $I_T$ by $I_\infty$ which has a simple expression in terms of the division and death rates~\eqref{eq:Iinfty}.

\section{From the intermitotic time to the division rate}

In this section we explain how we can recover the parameters $\beta$ and $\mu$ from the IMT distribution $I_\infty.$
We first present the method in the case when there is no death $(\mu=0).$
This simplification is useful to give an inversion formula which gives the rate $\beta$ in terms of $I_\infty.$
Then we extend the method to include possible death rates.

\subsection{The case $\mu=0$}\label{ssec:imt_beta}

In the case $\mu=0,$ the constant $C_\infty$ is equal to 1 and we have 
\[I_\infty(a)= \beta(a)e^{-\int_0^a\beta(a')\,da'}.\]
This expression can be inverted to recover the division rate from the IMT distribution (see \cite{HinowWebb,BiCFerGLOuS})
\beq\label{eq:inverse}\beta(a)=\f{I_\infty(a)}{\int_a^\infty I_\infty(a')\,da'}.\eeq
We start from the fitting of the experimental IMT distributions, which are observed to be positively skewed.
From a fitting procedure for  $I_\infty,$ we use Equation~\eqref{eq:inverse} to recover the division rate of the McKendrick--Von Foerster's equation.

\

First consider as in~\cite{HinowWebb} that the IMT distribution is a shifted gamma function (see also~\cite{Bernard} and references therein).
Setting
\beq\label{eq:gamma}I_\infty(a|m,\sigma)=\left\{\begin{array}{ll}
0&\text{if}\ 0\leq a\leq m,\\
\dis\f{a-m}{\sigma^2}e^{-\f{a-m}{\sigma}}&\text{if}\ a\geq m,
\end{array}\right.
\eeq
we can solve explicitly Equation~\eqref{eq:inverse} and we find
\beq\label{eq:beta_gamma}\beta(a)=\left\{\begin{array}{ll}
0&\text{if}\ 0\leq a\leq m,\\
\dis\f{a-m}{\sigma(\sigma+a-m)}&\text{if}\ a\geq m.
\end{array}\right.
\eeq
So by fitting an experimental IMT distribution with a shifted gamma function, we obtain two parameters $m$ and $\sigma$ which allows reconstruction of the division rate of the renewal equation.

To have a smoother transition at the minimum age of division $m,$ one can consider a second shifted gamma function
\beq\label{eq:gamma2}I_\infty(a|m,\sigma)=\left\{\begin{array}{ll}
0&\text{if}\ 0\leq a\leq m,\\
\dis\f{(a-m)^2}{2\sigma^3}e^{-\f{a-m}{\sigma}}&\text{if}\ a\geq m.
\end{array}\right.
\eeq
Then the corresponding $\beta$ is
\beq\label{eq:beta_gamma2}\beta(a)=\left\{\begin{array}{ll}
0&\text{if}\ 0\leq a\leq m,\\
\dis\f1\sigma\f{(a-m)^2}{2\sigma^2+2\sigma(a-m)+(a-m)^2}&\text{if}\ a\geq m.
\end{array}\right.
\eeq
The different functions~\eqref{eq:gamma} to~\eqref{eq:beta_gamma2} are plotted in Figure~\ref{fig:inverse_gamma}

\begin{figure}[h]
\begin{minipage}{.49\linewidth}
\begin{center}
\includegraphics[width=\textwidth]{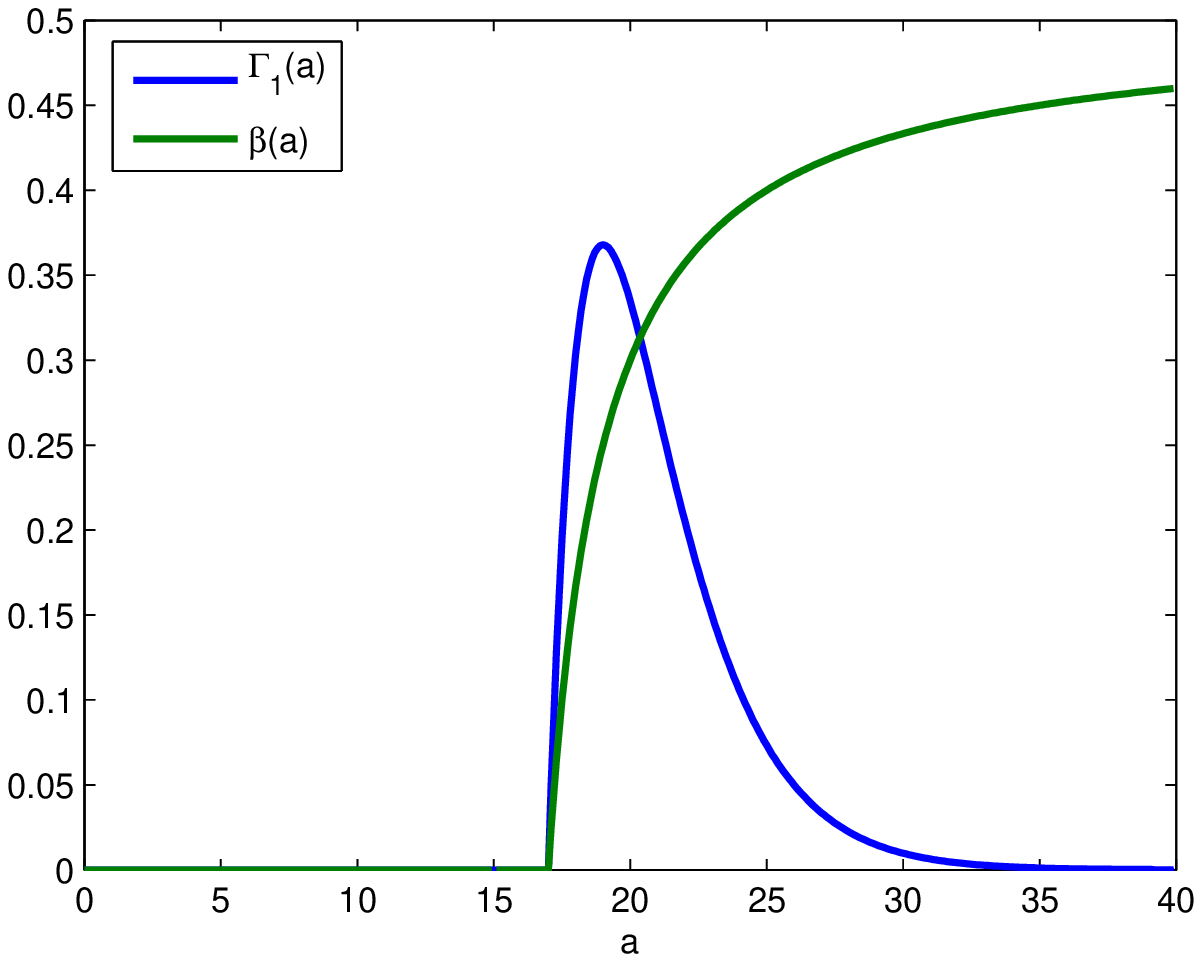}
\end{center}
\end{minipage}\hfill
\begin{minipage}{.49\linewidth}
\begin{center}
\includegraphics[width=\textwidth]{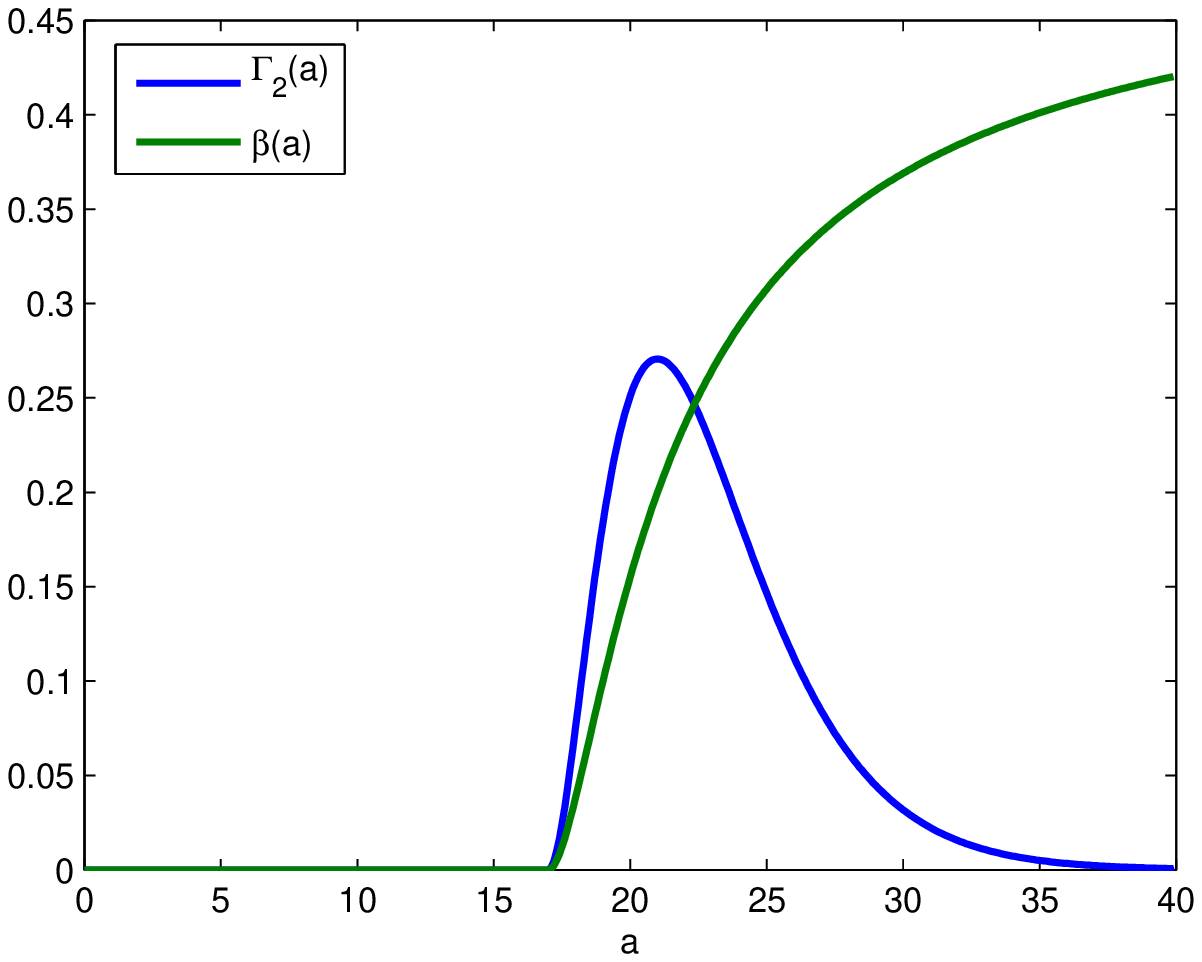}
\end{center}
\end{minipage}
\caption{The two different gamma functions and their corresponding division rate $\beta(a)$ are plotted for coefficients $m=17$ and $\sigma=2.$}\label{fig:inverse_gamma}
\end{figure}

\

It has been observed that an exponentially modified Gaussian (EMG) is often a better model for IMT distributions than the gamma function~\cite{Golubev,TysonGarbett} .
An EMG is defined as the convolution of a Gaussian with a decreasing exponential, but after solving it can be written with three parameters as
\beq\label{eq:emg}I_\infty(a|\beta_0,m,\sigma)=\beta_0\,Erfc\,\Bigl(\f{m-a}{\sigma}\Bigr) e^{-2\beta_0\bigl(\f{\beta_0\sigma^2}{2}-m+a\bigr)}.
\eeq
where the (complementary) error function is defined by 
$$Erfc(z) = 1-\frac{2}{\sqrt{\pi}} \, \int_0^z e^{-t^2} \,dt.$$
Replacing $I_\infty$ by an EMG in Equation~\eqref{eq:inverse}, we cannot compute explicitly the expression for $\beta.$ But by numerical comparison,
we obtain a division rate $\beta$ that is essentially indistinguishable from an error function (see Figure~\ref{fig:inverse}).

\begin{figure}[h!]
\begin{minipage}{.49\linewidth}
\begin{center}
\includegraphics[width=\textwidth]{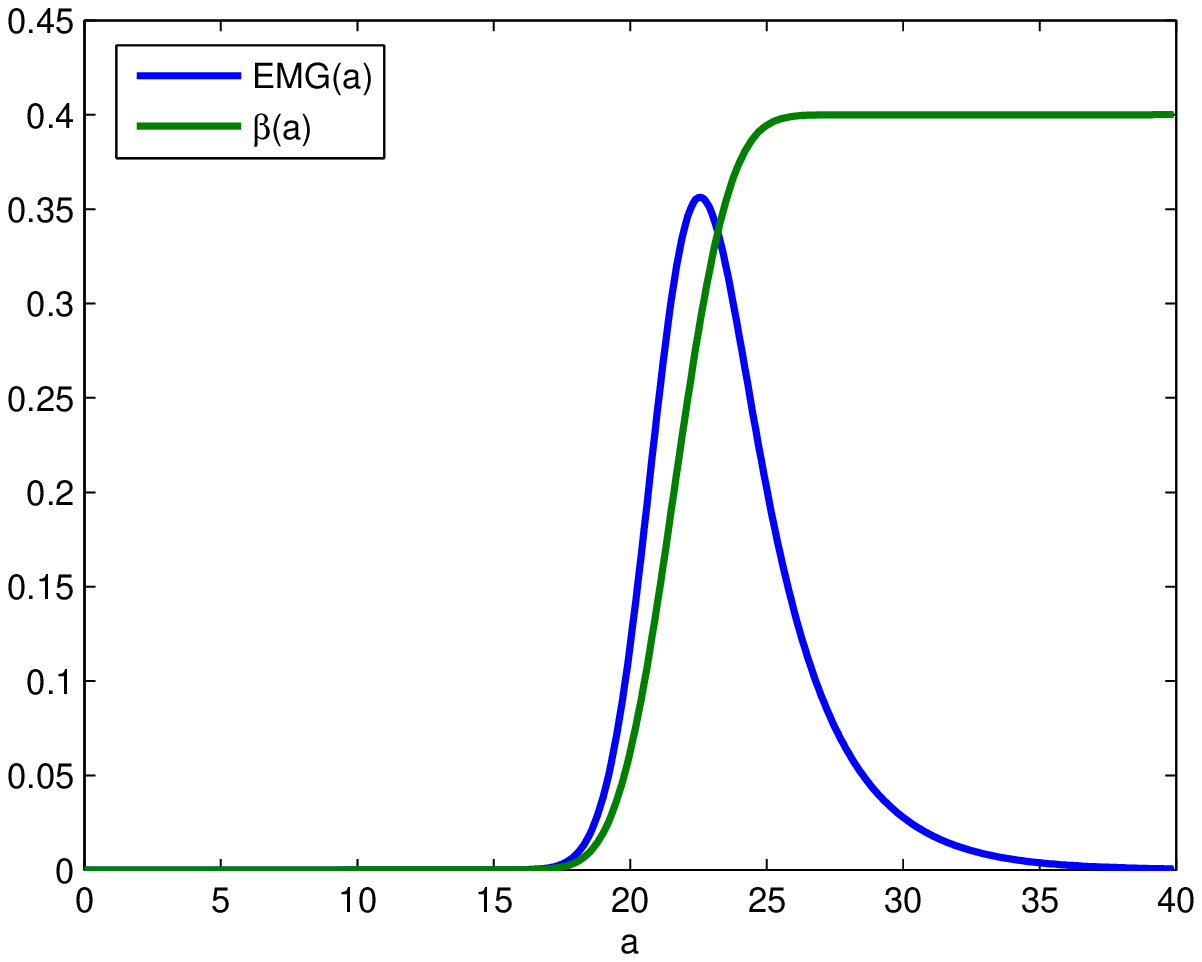}
\end{center}
\end{minipage}\hfill
\begin{minipage}{.49\linewidth}
\begin{center}
\includegraphics[width=\textwidth]{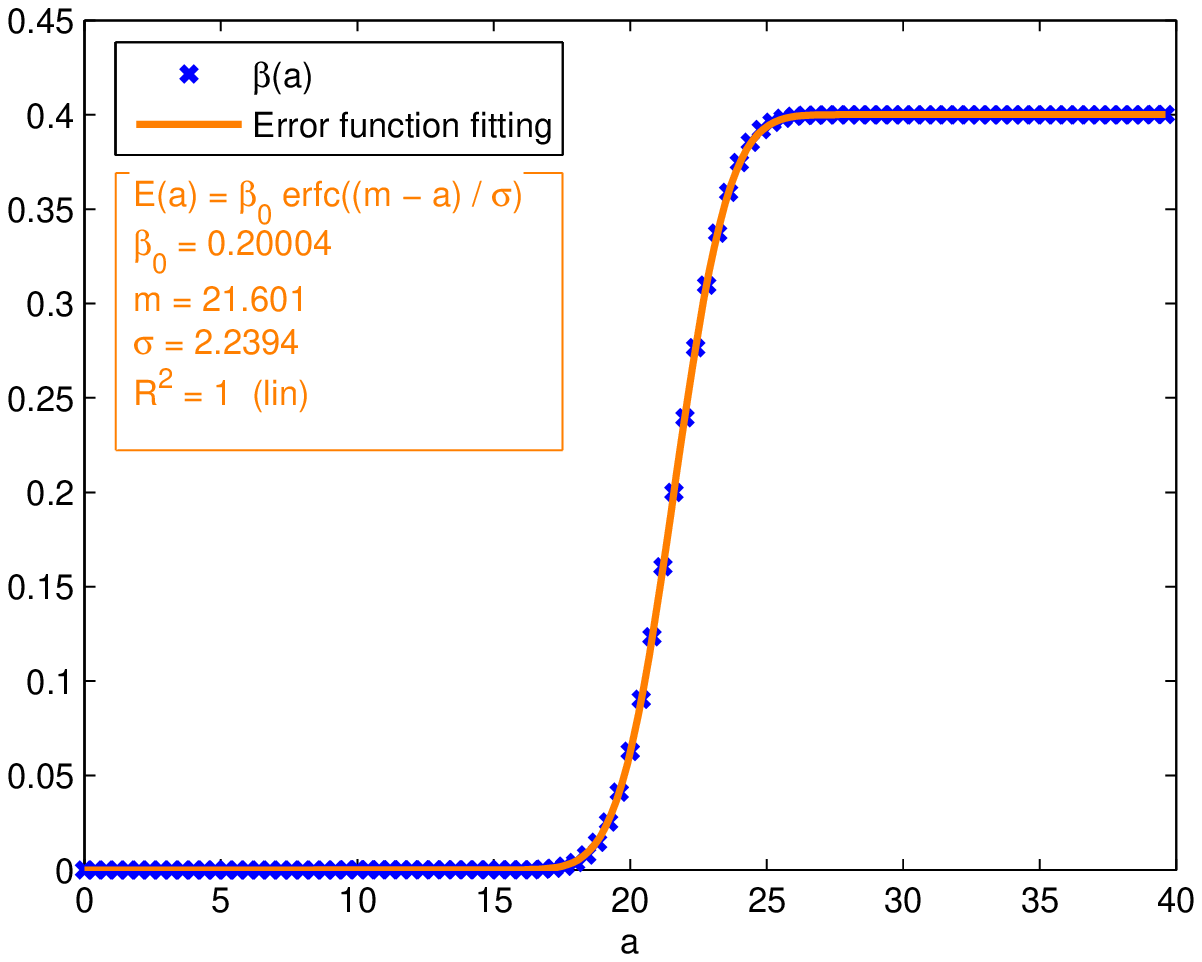}
\end{center}
\end{minipage}
\caption{Left: The division rate $\beta(a)$ is obtained numerically from~\eqref{eq:inverse} by using an EMG with coefficients $m=22,$ $\sigma=2$ and $\beta_0=0.2$ to fit  $I_\infty$~\eqref{eq:emg}.
Right: The formula for $\beta$, as an error function with these same three parameters, is numerically indistinguishable from the numerically obtained $\beta$ $(R^2=1)$.}\label{fig:inverse}
\end{figure}

Instead of fitting the IMT distribution by an EMG and then fitting the corresponding $\beta$ by an error function,
we may directly assume that $\beta$ is an error function
\beq\label{eq:beta_error}\beta(a)=\beta_0\, Erfc\Bigl(\f{m-a}{\sigma}\Bigr).\eeq
We can then explicitly derive a new fitting formula for $I_\infty$ due to Equation~\eqref{eq:Iinfty}
\beq\label{eq:imt_error}I_\infty(a|\beta_0,m,\sigma)=\beta_0\, Erfc\Bigl(\f{m-a}{\sigma}\Bigr)e^{-\int_0^a \beta_0\, Erfc(\f{m-a'}{\sigma})\,da'}\eeq
where the integral in the exponential can be computed as
\beq\label{eq:integral}\int_0^a Erfc\Bigl(\f{m-a'}{\sigma}\Bigr)\,da'=m\,Erfc\Bigl(\f{m}{\sigma}\Bigr)-\f{\sigma}{\sqrt\pi}e^{-(\f{m}{\sigma})^2}
-(m-a)Erfc\Bigl(\f{m-a}{\sigma}\Bigr)+\f{\sigma}{\sqrt\pi}e^{-(\f{m-a}{\sigma})^2}.\eeq
Using this formula for the IMT instead of an EMG formula, the fitting parameters provide immediately the division rate $\beta.$

\

When we know the Malthusian growth parameter (population growth rate $\lambda)$ from experimental data (see Figure~\ref{fig:lambda}),
it is possible to recover a division rate $\beta$ such that relation~\eqref{eq:lambda} is satisfied.
This is an important step in order to parameterize and apply an age-structured model to experimental data.
We notice the relationship of $\beta$ in terms of $I_\infty(a)$ is
$$2\int_0^\infty I_\infty(a)e^{-\lambda a}\,da=2\int_0^\infty\beta(a)e^{-\int_0^a\beta(a')\,da'}e^{-\lambda a}da=1.$$
To take advantage of this relationship between the IMT data, $\beta(a),$ and the malthusian parameter $\lambda$ at the equilibrium distribution,
we define a new histogram $(\tilde H_i)_{1\leq i\leq N}$ by
\beq\label{eq:Htilde}\forall\,i,\qquad\tilde H_i:=\f{2H_ie^{-\lambda a_i}}{\Delta a\sum_{i=1}^N 2H_ie^{-\lambda a_i}}\eeq
where $a_i:=(i+\f12)\Delta a$ is the mean age of the $i^{th}$ bar.
Thus defined, the new histogram incorporates information about $\lambda$ and satisfies the relation
\beq\label{eq:norm_hist_tilde}\Delta a\sum_{i=1}^\infty\tilde H_i=1.\eeq
We then fit this new histogram $(\tilde H_i)$ with the model
\beq\label{eq:tildeIinfty}\tilde I_\infty(a):=2I_\infty(a)e^{-\lambda a}\eeq
instead of fitting $(H_i)$ with $I_\infty(a).$
Because of the normalization~\eqref{eq:norm_hist_tilde}, we expect that the fitting provides parameters such that $\int_0^\infty \tilde I_\infty(a)\,da\approx 1$, and
this relation can be checked numerically {\it a posteriori}.

\

\noindent{\bf Method and example.}
We divide the process in three steps and illustrate it by an example. Each required fitting step can be performed using the freely available Ezyfit Matlab toolbox\\
$[<\mbox{\url{http://www.fast.u-psud.fr/ezyfit/}}>].$

\

\begin{itemize}
 \item[]\underline{Step 1: Determine equilibrium IMT distribution}

\begin{enumerate}
 \item[a)] Obtain a histogram $(H_i)$ for the experimental IMT distribution of control (untreated) cells.
{\it Here we use an IMT distribution obtained  in~\cite{TysonGarbett} for PC-9 lung cancer cells using  ETRAM (see Figure~\ref{fig:imt}).}
 \item[b)] Plot the time evolution of the total population on a log$_e$-scale,
verify a linear fit and obtain the slope as the experimental value for the Malthusian parameter $\lambda$ of the cell population {\it (see Figure~\ref{fig:lambda} for an example)}.
 \item[c)] Construct the new histogram $(\tilde H_i)$ from $(H_i)$ and $\lambda$ by using the definition~\eqref{eq:Htilde}.
\end{enumerate}

 \item[]\underline{Step 2: Obtain parameters from model fit to IMT distribution}
\begin{enumerate}
 \item[a)] Choose a form for $I_\infty$ as a gamma function~\eqref{eq:gamma} or~\eqref{eq:gamma2}, or as the new EMG form~\eqref{eq:imt_error}.
 \item[b)] Fit the histogram $(\tilde H_i)$ with the corresponding form $\tilde I_\infty$ from definition~\eqref{eq:tildeIinfty}.
{\it For PC-9 cancer cells we choose the form~\eqref{eq:imt_error}, because in~\cite{TysonGarbett} it was observed that the IMT distribution appeared to be an EMG}
{\it (see Figure~\ref{fig:imtfit} for the example)}.
 \item[c)] Verify that the correlation coefficient $R^2$ of the IMT data and the chosen form  $\tilde I_\infty(a)$ is close to $1.$
\end{enumerate}

 \item[]\underline{Step 3: Parameterize age-structured model}

If numerical integral $\int_0^\infty \tilde I_\infty(a)\,da$ is close to $1,$
the fitting parameters provide a good approximation of the division rate $\beta,$ which can then be propagated through the population, based on the choice of $I_\infty.$
{\it In the example, $\beta$ is given by Equation~\eqref{eq:beta_error} with the numerical paramaters of Figure~\ref{fig:imtfit}.}

\end{itemize}

\begin{figure}[h]
\begin{minipage}{\linewidth}
\begin{center}
\includegraphics[width=.6\textwidth]{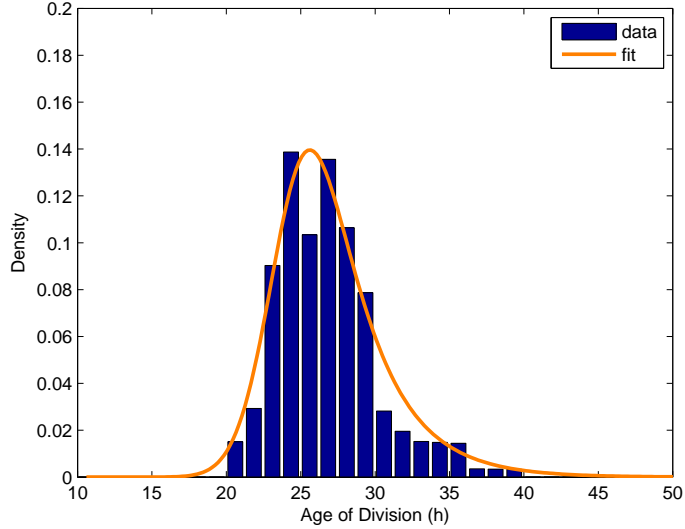}
\end{center}
\end{minipage}
\caption{Fitting of experimental data in~\cite{TysonGarbett} with the model~\eqref{eq:imt_error}. The fitting parameters are $\beta_0=0.14204,$ $m=24.456$ and $\sigma=3.3451.$
The correlation coefficient is $R^2=0.95363$ and the integral of $\tilde I_\infty(a|\beta_0,m,\sigma)$ is $\int_0^\infty \tilde I_\infty(a)\,da\approx 1.0983.$ The formula for the age dependent division rate is $\beta(a) = \beta_0 Erfc(\frac{m-a}{\sigma})$.}
\label{fig:imtfit}
\end{figure}

\

\subsection{Introducing a death rate}\label{ssec:imt_beta_mu}

If we include a death rate $\mu>0$ in the model, then there does not exist an inversion formula as Equation~\eqref{eq:inverse} to recover $\beta$ analytically.
So we consider that $\beta$ is a rational fraction given by Equation~\eqref{eq:beta_gamma} or~\eqref{eq:beta_gamma2}, or an error function given by Equation~\eqref{eq:beta_error},
and we derive the corresponding model $I_\infty$ from Equation~\eqref{eq:Iinfty}.
The death rate $\mu$ will appear in the expression of $I_\infty$ as an additional fitting parameter
and will be determined together with the parameters of $\beta$ by the fitting of experimental IMT distributions.
Notice that a fixed rate $\mu$ based on measured data can also be used if available to avoid the addition of a free parameter.

\

Unlike the case $\mu=0$ for which $C_\infty=1,$ we have for a positive death rate that $C_\infty<1.$
This constant is a function of the fitting parameters (see Equation~\eqref{eq:Cinfty}), but we do not have an analytic expression of this function in general.
A solution to this problem is to use the Malthusian parameter $\lambda,$ which incorporates information about $\mu$ and $\beta(a),$
to define a form $\tilde I_\infty(a)$ as in Section~\ref{ssec:imt_beta}.
Indeed we obtain, because of relation~\eqref{eq:lambda}, 
$$\tilde I_\infty(a):=2\beta(a)e^{-\int_0^a(\beta(a')+\mu)\,da'}e^{-\lambda a}$$
which does not involve $C_\infty.$
Then, the division rate $\beta(a)$ is obtained from $\tilde I_\infty(a)$ as before using the modified histogram $(\tilde H_i)$ defined from $(H_i)$ by~\eqref{eq:Htilde}.

\

\noindent\underline{Example.}
We fit the same distribution as in Section~\ref{ssec:imt_beta} still considering that $\beta$ is an error function.
With a constant death rate $\mu,$ we obtain the four parameters model
\beq\label{eq:imt_error_mu}\tilde I_\infty(a|\beta_0,m,\sigma,\mu)=2\beta_0\, Erfc\,\Bigl(\f{m-a}{\sigma}\Bigr)\,e^{-\int_0^a (\beta_0\, Erfc\,(\f{m-a'}{\sigma})+\mu +\lambda)\,da'}\eeq
where the integral $\int_0^a\beta_0\, Erfc\bigl(\f{m-a'}{\sigma}\bigr)\,da'$ is given by Equation~\eqref{eq:integral}.
The fitting provides new parameters for $\beta$ and a positive death rate $\mu$ (see Figure~\ref{fig:imtfitmu}).
The correlation $R^2$ is slightly better than in Figure~\ref{fig:imtfit} and the integral $\int_0^\infty\tilde I_\infty(a)\,da$ is significantly closer to 1.
So we can assume that mortality has to be considered for this cell line.

\begin{figure}[h]
\begin{minipage}{\linewidth}
\begin{center}
\includegraphics[width=.6\textwidth]{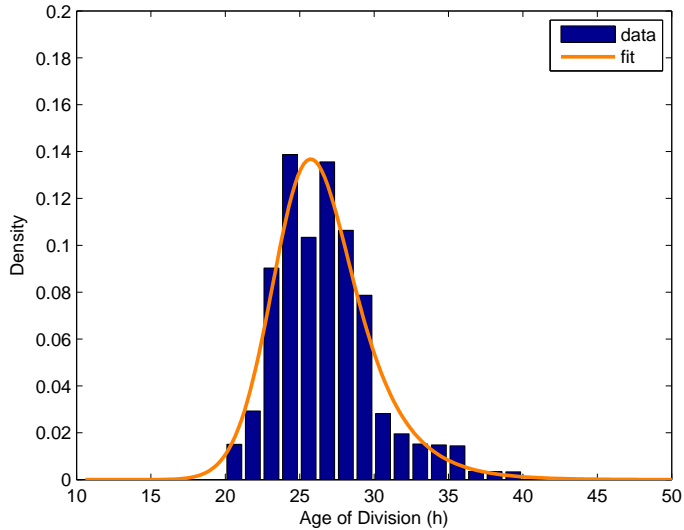}
\end{center}
\end{minipage}
\caption{Fitting of experimental data with the model~\eqref{eq:imt_error_mu}. The fitting parameters are $\beta_0=0.17879,$ $m=25.007,$ $\sigma=3.6141$ and $\mu=0.00333.$
The correlation coefficient is $R^2=0.95611$ and the integral of $\tilde I_\infty(a|\beta_0,m,\sigma)$ is $\int_0^\infty\tilde I_\infty(a)\,da\approx 1.0132.$
The formula for the age dependent division rate is $\beta(a) = \beta_0 Erfc(\frac{m-a}{\sigma})$.}\label{fig:imtfitmu}
\end{figure}

\

\section{Numerical simulation and discussion}\label{sec:numres}

Once the parameters accurately estimated,
we use them to compare the behavior of the solution of Equation~\eqref{eq:quiescence} to the experimental data in~\cite{TysonGarbett}.
Since the model does not have analytic solutions, we need to perform numerical simulations.
Many numerical methods are available for structured population equations (see for instance~\cite{Ackleh2,Angulo1,Angulo2,DouglasMilner,GabrielTine,Kostova}).
Here we use a scheme based on the method of characteristics, as in~\cite{Angulo1,Angulo2,Kostova}, for its anti-dissipative properties.

\

We can see in Figure~\ref{fig:compare} that the numerical simulations are very similar to the experimental curves.
In particular, the delay of twenty hours before the effect of the treatment on the growth of the population is apparent,
and this twenty hour period pulses two more times as the population approaches a new equilibrium distribution during the total time of the experiment.
We have thus developed an age-structured model that can explain the dynamic effects of erlotinib on PC-9 cells,
which are intrinsically dependent on the age of proliferating cells.
The model is also accurate in a quantitative point of view.
The quiescent fractions $f$ which are used in the numerical simulations are not chosen arbitrarily to obtain the adequate behavior.
They are linked to the treatment dose of erlotinib and estimated from experimental data in a rigorous way so that they are realistic.

The model has been chosen as simple as possible to be able to recover all the parameters from experimental IMT distributions.
This simplicity leads to qualitative differences between observed data and simulations in Figure~\ref{fig:compare}.
Experimental observation suggests the population reaches equilibrium when treated with drug whereas simulated population size is still increasing at the end of the experiment.
An age-structure for quiescent cells together with an age-dependent rate of death should be consider to explain this plateau effect.
Experimental data also suggest that drug treatment increases population growth rate above that of untreated cells briefly, but model does not capture this.
This could be obtained by considering a lower death rate for quiescent cells than for proliferating cells.
But for such more complex models, additional experimental data and a new parameter estimation method would be necessary to estimate the death rates.

\begin{figure}[h]
\psfrag{Ln(N(t))}[l]{$\ln\bigl(\f{N(t)}{N(0)}\bigr)$}
\psfrag{Ln(P(t)+Q(t))}[l]{\small$\ln(P(t)+Q(t))$}
\psfrag{t (h)}[c]{$t\ (h)$}
\begin{center}
\includegraphics[width=.9\textwidth]{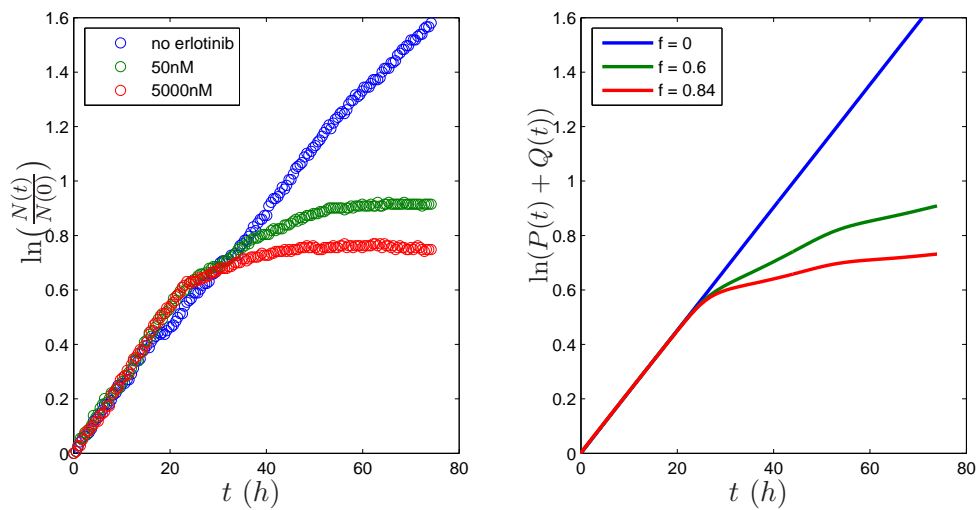}
\end{center}
\caption{Left: experimental data without erlotinib, with a $50nM$ dose and with a $5000nM$ dose.
The total quantity $N(t)$ is plotted on a log-scale.
Right: numerical simulation of model~\eqref{eq:quiescence} for $f=0,$ $f=0.6$ and $f=0.84.$
The curves represent the evolution of $\ln(P(t)+Q(t))$ where $P(t):=\int_0^\infty p(t,a)\,da$ is the total quantity of proliferating cells at time $t.$
They are obtained by solving Equation~\eqref{eq:quiescence} numerically with the parameters of Figure~\ref{fig:imtfitmu} and with $\nu=0.004.$}\label{fig:compare}
\end{figure}

\

\section*{Conclusion}

Linking experimental observation of cell behavior between the single-cell and population scales has recently been described using newly developed mathematical models~\cite{TysonGarbett}.
However, this approach does not take into account the possible cell age-dependent effects of a perturbation on cell behavior,
such as would be expected if the effects occur in cells at a specific position in the cell cycle.
Since these studies were performed with asynchronously dividing cell populations, it is evident that the mathematical models of these experiments should have age structure as a primary feature.
In fact, to fit the data, the authors had to use an artificial time offset to account for the age-structured effects.
Here we provide a formalized approach to accurately account for cell age-dependent effects on cellular behavior.
A major difficulty in the parameterization of age-structured models is the determination of the age dependent division rate.
Our study provides a method for the quantitative recovery of this rate by fitting experimental IMT distributions to special forms, such as gamma functions or exponential modified Gaussians.
This model, once successfully parameterized, is very useful for simulating and analyzing age-dependent phenomena in cell population dynamics. 
We have presented one such application for the \textit{in vitro} treatment of cancer cells by erlotinib.
This example shows the utility of age-structured population models in explaining the connection of drug therapy to  phenomena such as cell cycle phase entry into quiescence.
The method we have presented can be implemented readily to many issues in cell population behavior when there is experimental data based on cell age.
\

\section*{Aknowledgments}

The research stays of P. Gabriel at Vanderbilt University have been financially supported by a grant of the \emph{Fondation Pierre Ledoux Jeunesse Internationale}.

\

\appendix

\section{Convergence to the equilibrium}\label{ap:equilibrium}

The long-time behavior $p(t,a)\sim const\,\hat p(a)e^{\lambda t}$ can be proven by using semi-groups methods~\cite{MetzDiekmann,Webb} or General Relative Entropy techniques~\cite{MMP2,BP}.
We detail here a result provided by General Relative Entropy.

Consider $\phi$ the unique solution to the adjoint eigenvalue problem
$$\left\{\begin{array}{l}
\lambda \phi(a) - \partial_a\phi(a) + \beta(a)\phi(a) + \mu\,\phi(a) = 2\phi(0)\beta(a),
\vspace{2mm}\\
\phi(\cdot)\geq0,\quad \int\hat p(a)\phi(a)\,da=1.
\end{array}\right.
$$
Then the General Relative Entropy method allows us to prove that
$$\int_0^\infty |p(t,a)e^{-\lambda t}-\rho_0\hat p(a)|\phi(a)\,da\xrightarrow[t\to\infty]{}0$$
where
$$\rho_0=\int_0^\infty\phi(a)p_0(a)\,da.$$

\section{Convergence of the IMT distribution}\label{ap:claim}

\begin{theorem}\label{th:ITconv}
Suppose that Assumptions~\eqref{as:beta} and \eqref{as:betat0} are satisfied and consider an initial distribution $\bar p_0$ such that $\bar p_0(a)=0$ for all $a>t_0.$
Then we have the convergence
$$\int_0^\infty|I_T(a)-I_\infty(a)|\,da\xrightarrow[T\to\infty]{}0$$
where $I_T$ and $I_\infty$ are defined in~\eqref{eq:IT} and \eqref{eq:Iinfty}.
\end{theorem}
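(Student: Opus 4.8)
The plan is to reduce the $L^1$ statement to one short explicit computation, exploiting that $I_T$ and $I_\infty$ are probability densities and that, by Assumption~\eqref{as:betat0}, both vanish identically on $[0,t_0]$.

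First I would represent $\bar p$ through the characteristics of the transport, namely $\bar p(t,a)=\bar p_0(a-t)\,\mathcal{S}(a)/\mathcal{S}(a-t)$ for $a\ge t$ and $\bar p(t,a)=0$ for $a<t$, where $\mathcal{S}(a):=\exp\bigl(-\int_0^a(\beta(s)+\mu)\,ds\bigr)$. Since $\bar p_0$ is supported in $[0,t_0]$, for fixed $a$ the function $t\mapsto\bar p(t,a)$ is supported in $[\max(0,a-t_0),\min(a,T)]$, and the change of variable $s=a-t$ gives, for every $a\ge t_0$ and every $T>a$,
$$\int_0^T\beta(a)\bar p(t,a)\,dt=M_0\,\beta(a)\,\mathcal{S}(a),\qquad M_0:=\int_0^{t_0}\frac{\bar p_0(s)}{\mathcal{S}(s)}\,ds ,$$
with $0<M_0<\infty$ because $\beta\equiv0$ on $[0,t_0]$ forces $\mathcal{S}(s)=e^{-\mu s}\ge e^{-\mu t_0}$ there and $\bar p_0\in L^1$ is not identically zero. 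For $T<a<T+t_0$ the same integral is bounded above by $M_0\beta(a)\mathcal{S}(a)$, and it vanishes for $a\ge T+t_0$. Hence, on $(t_0,T)$ one has \emph{exactly} $I_T(a)=\tfrac{M_0}{C_T}\,\beta(a)\mathcal{S}(a)$, whereas $I_\infty(a)=\tfrac{1}{C_\infty}\,\beta(a)\mathcal{S}(a)$ by~\eqref{eq:Iinfty}.

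Next I would identify the limit of the normalizing constant. Splitting $C_T=\int_0^\infty\int_0^T\beta(a)\bar p(t,a)\,dt\,da$ at $a=T$ and inserting the formula above,
$$C_T=M_0\int_{t_0}^{T}\beta(a)\mathcal{S}(a)\,da+\int_T^{T+t_0}\beta(a)\Bigl(\int_0^T\bar p(t,a)\,dt\Bigr)da .$$
Assumption~\eqref{as:beta} makes $\int_0^\infty\beta(a)\mathcal{S}(a)\,da\le\int_0^\infty\beta(a)e^{-\int_0^a\beta(s)\,ds}\,da=1$ a convergent integral equal to $C_\infty$ (using $\beta\equiv0$ on $[0,t_0]$ and~\eqref{eq:Cinfty}); therefore the first term tends to $M_0C_\infty$ and the second is dominated by $M_0\int_T^\infty\beta(a)\mathcal{S}(a)\,da\to0$. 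This yields $C_T\to M_0C_\infty>0$, hence $\tfrac{M_0}{C_T}\to\tfrac1{C_\infty}$.

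Finally I would add the three contributions to $\int_0^\infty|I_T-I_\infty|\,da$: on $[0,t_0]$ the integrand vanishes; on $(t_0,T)$ it equals $\bigl|\tfrac{M_0}{C_T}-\tfrac1{C_\infty}\bigr|\,\beta(a)\mathcal{S}(a)$, whose integral is at most $\bigl|\tfrac{M_0}{C_T}-\tfrac1{C_\infty}\bigr|\,C_\infty\to0$; and on $[T,\infty)$ both $I_T$ (supported in $[0,T+t_0]$) and $I_\infty$ have tails bounded respectively by $\tfrac{M_0}{C_T}\int_T^\infty\beta\mathcal{S}\,da$ and $\tfrac1{C_\infty}\int_T^\infty\beta\mathcal{S}\,da$, both tending to $0$. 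Summing gives the claimed convergence. (Equivalently, the same computation shows $I_T\to I_\infty$ pointwise, and since $\int I_T=\int I_\infty=1$ one may instead invoke Scheffé's lemma.) I expect the only genuinely delicate point to be the bookkeeping for ages near the cutoff $T$, i.e.\ cells caught mid-division when observation stops; but Assumption~\eqref{as:beta} forces $\int_0^\infty\beta\mathcal{S}\,da<\infty$, so that boundary layer carries vanishing mass, and everything else is a substitution followed by tail estimates.
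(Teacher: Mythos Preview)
Your proof is correct and follows essentially the same route as the paper's: both use the method of characteristics to write $I_T(a)$ as $C_T^{-1}\beta(a)\mathcal{S}(a)$ times a truncated mass of $\bar p_0$, then show $C_T\to M_0C_\infty$ and split the $L^1$-difference over $[0,t_0]$, $(t_0,T)$, and $[T,\infty)$. The only cosmetic differences are that you keep $\mu$ general throughout (the paper treats $\mu=0$ and remarks that the extension is easy) and you note the Scheff\'e alternative, which the paper does not mention.
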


For the sake of simplicity, the proof of Theorem~\ref{th:ITconv} is given in the case $\mu=0.$
But it can be easily adapted to the case with a death rate.

\begin{proof}[Proof of Theorem~\ref{th:ITconv} in the case $\mu=0$]

First we extend the initial distribution and the division rate $\beta(a)$ to negative ages by setting $\bar p_0(a)=\beta(a)=0$ for $a<0.$
Since the daughters of the labeled cells are not tracked, the age distribution $\bar p(t,a)$ of labeled cells satisfies a transport equation without boundary condition.
Thus it writes, using the characteristic method,
$$\forall\,t,a\geq0,\qquad \bar p(t,a)=\bar p_0(a-t)e^{-\int_0^t\beta(a-s)\,ds}.$$
Introducing this expression in the definition of $I_T(a)$ we obtain, with changes of variables,
\begin{align*}
I_T(a)&=C_T^{-1}\int_0^T\beta(a)\bar p_0(a-t)e^{-\int_0^t\beta(a-s)\,ds}dt\\
&=C_T^{-1}\int_0^T\beta(a)\bar p_0(a-t)e^{-\int_{a-t}^a\beta(a')\,da'}dt\\
&=C_T^{-1}\int_{a-T}^a\beta(a)\bar p_0(u)e^{-\int_{u}^a\beta(a')\,da'}du\\
&=C_T^{-1}\beta(a)e^{-\int_0^a\beta(a')\,da'}\int_{a-T}^a\bar p_0(u)e^{\int_0^u\beta(a')\,da'}du.
\end{align*}
Since the support of $\bar p_0$ is included in $[0,t_0]$ and $\beta(a)=0$ for $a\in[0,t_0]$ due to Assumption~\eqref{as:beta}, we have for all $u\in\R$
$$\bar p_0(u)e^{\int_0^u\beta(a')\,da'}=\bar p_0(u)$$
so $I_T(a)$ writes
$$I_T(a)=C_T^{-1}\beta(a)e^{-\int_0^a\beta(a')\,da'}\int_{a-T}^a\bar p_0(u)du.$$
Define the primitive
$$P(a):=\int_0^a\bar p_0(u)\,du.$$
This function is nondecreasing and satifies $P(a)=0$ for $a\leq0$ and $P(a)=P(t_0)=\int_0^\infty\bar p_0$ for $a\geq t_0,$ so we have
$$\int_{a-T}^a\bar p_0(u)du=P(a)-P(a-T)=
\left\{\begin{array}{ll}
P(a)&\text{if}\ 0\leq a\leq t_0,\\
P(t_0)&\text{if}\ t_0\leq a\leq T,\\
P(t_0)-P(a-T)&\text{if}\ T\leq a\leq T+t_0,\\
0&\text{if}\ a\geq T+t_0.
\end{array}\right.
$$
Because $\beta(a)=0$ for $a\leq t_0,$ we obtain by integration of $I_T(a)$ on $\R^+$ 
$$1=C_T^{-1}P(t_0)\int_0^T\beta(a)e^{-\int_0^a\beta(a')\,da'}\,da+C_T^{-1}\int_T^{T+t_0}\beta(a)e^{-\int_0^a \beta(a')\,da'} (P(t_0)-P(a-T))\,da$$
which gives
\begin{align*}
C_T-P(t_0)\int_0^T\beta(a)e^{-\int_0^a\beta(a')\,da'}\,da&=\int_T^{T+t_0}\beta(a)e^{-\int_0^a \beta(a')\,da'} (P(t_0)-P(a-T))\,da\\
&\leq P(t_0)\int_T^{T+t_0}\beta(a)e^{-\int_0^a \beta(a')\,da'}\,da\\
&\xrightarrow[T\to+\infty]{}0.
\end{align*}
Since
$$\int_0^\infty\beta(a)e^{-\int_0^a\beta(a')\,da'}\,da=1,$$
we conclude that
$$C_T\xrightarrow[T\to+\infty]{}P(t_0)=\int_0^\infty\bar p_0(a)\,da.$$
Then we have
\begin{align*}
\int_0^\infty\left|I_T(a)-I_\infty(a)\right|\,da&\leq\left|C_T^{-1}P(t_0)-1\right|\int_0^T\beta(a)e^{-\int_0^a\beta(a')\,da'}\,da\\
&\hspace{1cm}+\int_T^{T+t_0}\left|C_T^{-1}[P(t_0)-P(a-T)]-1\right|\beta(a)e^{-\int_0^a\beta(a')\,da'}\,da\\
&\hspace{2cm}+\int_{T+t_0}^\infty \beta(a)e^{-\int_0^a\beta(a')\,da'}\,da\\
&\leq \left|C_T^{-1}P(t_0)-1\right|\int_0^T\beta(a)e^{-\int_0^a\beta(a')\,da'}\,da + K \int_T^\infty \beta(a)e^{-\int_0^a\beta(a')\,da'}\,da \\
&\xrightarrow[T\to\infty]{}0
\end{align*}
and it ends the proof of Theorem~\ref{th:ITconv}.

\end{proof}

%%%%%%%%%%%%%%%%%%%%%%%%%%%%%%%%%%%%
%
%%%%%% BIBLIO %%%%%%%%%%%%%%%%%%%%%%
%
%%%%%%%%%%%%%%%%%%%%%%%%%%%%%%%%%%%%

\bibliographystyle{abbrv}
\bibliography{erlotinib}

\end{document}